\newtheorem{theorem}{Theorem}
\theoremstyle{definition}
\newtheorem{definition}{Definition}
\newtheorem{remark}{Remark}
\begin{document}

\title[On Hermite-Hadamard type inequalities for harmonical $h$-convex interval-valued functions]{
On Hermite-Hadamard type inequalities for harmonical $h$-convex interval-valued functions}

\author{Dafang Zhao, Tianqing An, Guoju Ye and Delfim F. M. Torres}

\address{Dafang Zhao\\
College of Science, Hohai University, Nanjing, Jiangsu 210098, China\\
School of Mathematics and Statistics, Hubei Normal University, Huangshi, Hubei 435002, China\\
\email{dafangzhao@163.com}}

\address{Tianqing An\\
College of Science, Hohai University, Nanjing, Jiangsu 210098, China\\
\email{antq@hhu.edu.cn}}

\address{Guoju Ye\\
College of Science, Hohai University, Nanjing, Jiangsu 210098, China\\
\email{yegj@hhu.edu.cn}}

\address{Delfim F. M. Torres\\
Center for Research and Development in Mathematics and Applications (CIDMA),\\
Department of Mathematics, University of Aveiro, 3810-193 Aveiro, Portugal\\
\email{delfim@ua.pt}}

\CorrespondingAuthor{Delfim F. M. Torres}


\date{Submitted 27.06.2018; Revised 22.10.2019; Accepted 15.11.2019} 

\keywords{Hermite--Hadamard inequalities;
harmonical $h$-convexity; interval-valued functions}

\subjclass{26D15, 26E25, 28B20}


\begin{abstract}
We introduce and investigate the concept of harmonical $h$-convexity
for interval-valued functions. Under this new concept, we prove some 
new Hermite--Hadamard type inequalities for the interval Riemann integral.
\end{abstract}

\maketitle


\section{Introduction}

The following inequality is known in the literature as the Hermite--Hadamard
inequality:
$$
f\Big(\frac{a+b}{2}\Big)
\leq \frac{1}{b-a}\int^{b}_{a}f(x)dx
\leq \frac{f(a)+f(b)}{2},
$$
where $f:I\subseteq\mathbb{R}\rightarrow \mathbb{R}$ is a convex function 
on the interval $I$ and $a, b\in I$ with $a < b$. For various interesting 
extensions and generalizations of this inequality, see
\cite{D17,NNMA16,S10}.
In 2014, \.{I}\c{s}can introduced the concept of harmonical convexity
and established some Hermite--Hadamard type inequalities for this
class of functions \cite{I14}. Some further refinements of such inequalities,
for harmonical convex functions, have been studied in \cite{I162,L17,NNII17}.
In 2015, Noor et al. introduced the class of harmonical $h$-convex functions
and established some Hermite--Hadamard type inequalities \cite{NNAC15}.
For some recent investigations on harmonical $h$-convexity, we refer
the interested readers to \cite{ANMN17,M17,M15}.

On the other hand, interval analysis and interval-valued functions were initially
introduced in numerical analysis by Moore in the celebrated book \cite{M66}.
Because of its wide applications in various fields, interval analysis has emerged
as a very useful research area over the last fifty years: see, e.g.,
\cite{C15,CB,M09} and references therein. Recently, several classical integral 
inequalities have been extended not only to the context of interval-valued
functions by Chalco-Cano et al. \cite{C12,CL15},
Rom\'{a}n-Flores et al. \cite{R18}, Flores-Franuli\v{c} et al. \cite{FC},
Costa and Rom\'{a}n-Flores \cite{CF}, but also to more general set-valued
maps by Klari\v{c}i\'{c} Bakula and Nikodem \cite{K18},
Matkowski and Nikodem \cite{MN}, Mitroi et al. \cite{MNW}, 
and Nikodem et al. \cite{NSS}.

Our research is mainly motivated by the results of \.{I}\c{s}can \cite{I14}
and Noor et al. \cite{NNAC15}. We begin by introducing the notion of
harmonical $h$-convexity for interval-valued functions.
Then we prove some new Hermite--Hadamard type inequalities
for the introduced class of functions. Our inequalities are 
interval-valued counterparts of the results from \cite{I14,NNAC15}.

The paper is organized as follows. After Section~\ref{sec:2} of preliminaries,
in Section~\ref{sec:3} the harmonical $h$-convexity concept
for interval-valued functions is given and new
Hermite--Hadamard type inequalities are proved.
We end with Section~\ref{sec:4} of conclusions and future work.


\section{Preliminaries}
\label{sec:2}

We begin by recalling some basic definitions, notation and properties,
which are used throughout the paper. A real interval $[u]$ is the bounded,
closed subset of $\mathbb{R}$ defined by
$$
[u]=[\underline{u},\overline{u}]
=\{x\in\mathbb{R}|\ \underline{u}\leq x\leq\overline{u}\},
$$
where $\underline{u}, \overline{u}\in \mathbb{R}$
and $\underline{u}\leq\overline{u}$. The numbers
$\underline{u}$ and $\overline{u}$ are called the left
and right endpoints of $[\underline{u},\overline{u}]$,
respectively. When $\underline{u}$ and $\overline{u}$ are equal,
the interval $[u]$ is said to be degenerated.
In this paper, the term interval will mean a nonempty interval.
We call $[u]$ positive if $\underline{u}>0$ or negative
if $\overline{u}<0$. The inclusion ``$\subseteq$'' is defined by
$$
[\underline{u},\overline{u}]\subseteq[\underline{v},\overline{v}]
\Leftrightarrow \underline{v}\leq\underline{u},\overline{u}\leq\overline{v}.
$$
For an arbitrary real number $\lambda$ and $[u]$,
the interval $\lambda [u]$ is given by
\begin{equation*}
\lambda[\underline{u},\overline{u}]=
\begin{cases}
[\lambda\underline{u},\lambda\overline{u}]& \text{if $\lambda>0$},\\
\{0\}& \text{if $\lambda=0$},\\
[\lambda\overline{u},\lambda\underline{u}]& \text{if $\lambda<0$}.
\end{cases}
\end{equation*}
For $[u]=[\underline{u},\overline{u}]$ and $[v]=[\underline{v},\overline{v}]$,
the four arithmetic operators are defined by
$$
[u]+[v]=[\underline{u}+\underline{v},\overline{u}+\overline{v}],
$$
$$
[u]-[v]=[\underline{u}-\overline{v},\overline{u}-\underline{v}],
$$
$$
[u]\cdot[v]=\big[\min\{\underline{u}\underline{v},\underline{u}
\overline{v},\overline{u}\underline{v},\overline{u}\overline{v}\},
\max\{\underline{u}\underline{v},\underline{u}\overline{v},
\overline{u}\underline{v},\overline{u}\overline{v}\}\big],
$$
\begin{equation*}
\begin{split}
[u]/[v]=\big[
&\min\{\underline{u}/\underline{v},\underline{u}/\overline{v},
\overline{u}/\underline{v},\overline{u}/\overline{v}\},\\
&\max\{\underline{u}/\underline{v},\underline{u}/\overline{v},
\overline{u}/\underline{v},\overline{u}/\overline{v}\}\big],
{\rm where}\ \  0\notin[\underline{v},\overline{v}].
\end{split}
\end{equation*}
We denote by $\mathbb{R}_{\mathcal{I}}$ the set of all intervals of
$\mathbb{R}$, and by $\mathbb{R}^{+}_{\mathcal{I}}$ and $\mathbb{R}^{-}_{\mathcal{I}}$
the set of all positive intervals and negative intervals of $\mathbb{R}$, respectively.
The Hausdorff--Pompeiu distance between intervals $[\underline{u},\overline{u}]$
and $[\underline{v},\overline{v}]$ is defined by
$$
d\big([\underline{u},\overline{u}],[\underline{v},\overline{v}]\big)
=\max\Big\{|\underline{u}-\underline{v}|,|\overline{u}-\overline{v}|\Big\}.
$$
It is well known that $(\mathbb{R}_{\mathcal{I}}, d)$
is a complete metric space.

\begin{definition}[See \cite{M79}]
\label{defn2.1}
\rm Let $f:[a,b]\rightarrow \mathbb{R}_{\mathcal{I}}$ be such that
$f(t)=[\underline{f}(t),\overline{f}(t)]$ for each $t\in[a,b]$, and
$\underline{f}$, $\overline{f}$ are Riemann integrable on $[a,b]$.
Then we say that $f$ is Riemann integrable on $[a,b]$ and denote
$$
\int_{a}^{b}f(t)dt=\Bigg[
\int_{a}^{b}\underline{f}(t)dt,
\int_{a}^{b}\overline{f}(t)dt\Bigg].
$$
The collection of all interval-valued functions that are $R$-integrable
on $[a,b]$ will be denoted by $\mathcal{IR}_{([a,b])}$.
\end{definition}

We end this section of preliminaries by recalling some useful known concepts.

\begin{definition}[See \cite{I14}]
\label{defn3.2}
\rm We say that $K_{h}\subset\mathbb{R}\setminus{\{0\}}$ is a harmonical
convex set if
\begin{equation*}
\frac{xy}{t x+(1-t)y}\in K_{h}
\end{equation*}
for all $x,y\in K_{h}$ and $t\in[0,1]$.
\end{definition}

\begin{definition}[See \cite{NNAC15}]
\label{defn3.6}
\rm Let $h:[0,1]\subseteq J\rightarrow \mathbb{R}$
be a non-negative function with $h\not\equiv 0$,
and $K_{h}$ a harmonical convex set. We say that
$f:K_{h}\rightarrow \mathbb{R}$
is a harmonical $h$-convex function if
$$
f\Big(\frac{xy}{t x+(1-t)y}\Big)\leq h(t)f(x)+h(1-t)f(y)
$$
for all $x,y\in K_{h}$ and $t\in[0,1]$.
\end{definition}
Note that if $h(t)=t$, then function $f$ is called a harmonical convex function \cite{I14};
if $h(t)=1$, then $f$ is called a harmonical $P$-convex function \cite{NNAC15};
while, if $h(t)=t^s$, then $f$ is called a harmonical $s$-convex function \cite{NNAC15}.


\section{Main results: new Hermite--Hadamard type inequalities}
\label{sec:3}

In this section, we prove new Hermite--Hadamard type inequalities
for harmonical $h$-convex interval-valued functions.

\begin{definition}
\label{defn3.7}
\rm Let $h:[0,1]\subseteq J\rightarrow \mathbb{R}$
be a non-negative function such that $h\not\equiv 0$,
and $K_{h}$ a harmonical convex set.
We say that $f:K_{h}\rightarrow \mathbb{R}^{+}_{\mathcal{I}}$ is a
harmonical $h$-convex interval-valued function if
\begin{equation}\label{3.2}
h(t) f(x)+h(1-t)f(y) \subseteq f\Big(\frac{xy}{t x+(1-t)y}\Big)
\end{equation}
for all $x,y\in K_{h}$ and $t\in[0,1]$.
If the set inclusion \eqref{3.2} is reversed, then $f$ is said to be
a harmonical $h$-concave interval-valued function.
The set of all harmonical $h$-convex and harmonical $h$-concave
interval-valued functions are denoted by $SX(h,K_{h},\mathbb{R}^{+}_{\mathcal{I}})$
and $SV(h,K_{h},\mathbb{R}^{+}_{\mathcal{I}})$, respectively.
\end{definition}

The next theorem is an interval-valued counterpart 
of \cite[Theorem 3.2]{NNAC15}.

\begin{theorem}
\label{thm3.1}
Let $f:K_{h}\rightarrow \mathbb{R}^{+}_{\mathcal{I}}$ be an interval-valued
function with $a<b$ and $a,b\in K_{h}$, $f\in \mathcal{IR}_{([a,b])}$,
and let $h:[0,1]\rightarrow (0,\infty)$ be a continuous function.
If $f\in SX(h,K_{h},\mathbb{R}^{+}_{\mathcal{I}})$, then
$$
\frac{1}{2h(\frac{1}{2})}f\Big(\frac{2ab}{a+b}\Big)
\supseteq \frac{ab}{b-a}\int^{b}_{a}\frac{f(x)}{x^{2}}dx
\supseteq \big[f(a)+f(b)\big]\int^{1}_{0}h(t)dt.
$$
If $f\in SV(h,K_{h},\mathbb{R}^{+}_{\mathcal{I}})$, then
$$
\frac{1}{2h(\frac{1}{2})}f\Big(\frac{2ab}{a+b}\Big)
\subseteq \frac{ab}{b-a}\int^{b}_{a}\frac{f(x)}{x^{2}}dx
\subseteq \big[f(a)+f(b)\big]\int^{1}_{0}h(t)dt.
$$
\end{theorem}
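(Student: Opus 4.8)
The plan is to mimic the classical Hermite--Hadamard argument, but carrying intervals (and hence the reversed inclusions) throughout. Recall that for interval-valued functions the inclusion $f(x)\supseteq g(x)$ pointwise implies $\int f \supseteq \int g$ by Definition~\ref{defn2.1}, and that harmonical $h$-convexity in the interval sense (Definition~\ref{defn3.7}) reads $h(t)f(x)+h(1-t)f(y)\subseteq f\bigl(\tfrac{xy}{tx+(1-t)y}\bigr)$. The substitution that linearises the harmonic mean is the standard one: as $t$ ranges over $[0,1]$, the point $x=\tfrac{ab}{ta+(1-t)b}$ traces $[a,b]$, and one checks (a routine change of variables) that $\int_0^1 f\bigl(\tfrac{ab}{ta+(1-t)b}\bigr)\,dt = \tfrac{ab}{b-a}\int_a^b \tfrac{f(x)}{x^2}\,dx$; this identity, applied to both endpoint functions $\underline f$ and $\overline f$, transfers verbatim to the interval integral.

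\textbf{Right-hand inclusion.} For each fixed $t\in[0,1]$, apply \eqref{3.2} with $x=a$, $y=b$ to get
$$
h(t)f(a)+h(1-t)f(b)\subseteq f\Big(\frac{ab}{ta+(1-t)b}\Big).
$$
Integrating this inclusion over $t\in[0,1]$ (legitimate since $h$ is continuous, hence $h\cdot\underline f(a)+\ldots$ is integrable, and monotonicity of the interval integral under inclusion holds), and using that $\int_0^1 h(t)\,dt=\int_0^1 h(1-t)\,dt$, the left side becomes $\bigl[f(a)+f(b)\bigr]\int_0^1 h(t)\,dt$ and the right side becomes $\tfrac{ab}{b-a}\int_a^b\tfrac{f(x)}{x^2}\,dx$ by the change of variables above. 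This gives the second inclusion.

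\textbf{Left-hand inclusion.} Put $t=\tfrac12$ in \eqref{3.2} but applied to the two points $u=\tfrac{ab}{ta+(1-t)b}$ and $v=\tfrac{ab}{(1-t)a+tb}$, whose harmonic-type combination with weights $\tfrac12,\tfrac12$ is exactly $\tfrac{2ab}{a+b}$; this yields
$$
h\Big(\tfrac12\Big)\Big[f\Big(\tfrac{ab}{ta+(1-t)b}\Big)+f\Big(\tfrac{ab}{(1-t)a+tb}\Big)\Big]\subseteq f\Big(\tfrac{2ab}{a+b}\Big)
$$
for every $t$. Integrate over $t\in[0,1]$; by symmetry of the substitution the two integrals on the left coincide, each equal to $\tfrac{ab}{b-a}\int_a^b\tfrac{f(x)}{x^2}\,dx$, so the left side is $2h(\tfrac12)\cdot\tfrac{ab}{b-a}\int_a^b\tfrac{f(x)}{x^2}\,dx$ while the right side is just $f\bigl(\tfrac{2ab}{a+b}\bigr)$. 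Dividing by $2h(\tfrac12)$ (which is positive, so it respects inclusions and preserves interval structure by the scalar-multiplication rule) gives the first inclusion. Combining the two halves proves the convex case; the concave case follows identically with every ``$\subseteq$'' replaced by ``$\supseteq$''.

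The only genuine obstacle is bookkeeping rather than depth: one must verify that the elementary interval operations used — adding two interval-valued integrands, integrating an inclusion, and multiplying by the positive scalar $1/(2h(\tfrac12))$ — all preserve the direction of ``$\subseteq$'', which is immediate from the definitions in Section~\ref{sec:2} once one notes $f$ takes values in $\mathbb{R}^{+}_{\mathcal{I}}$ and $h>0$. I would also be slightly careful that the substitution $x=\tfrac{ab}{ta+(1-t)b}$ is a $C^1$ diffeomorphism of $[0,1]$ onto $[a,b]$ with $dx = \tfrac{ab(b-a)}{(ta+(1-t)b)^2}\,dt = \tfrac{x^2(b-a)}{ab}\,dt$, so that the Jacobian produces exactly the factor $\tfrac{ab}{b-a}$ and the weight $\tfrac{1}{x^2}$; this is a one-line computation but it is the crux of why the harmonic mean appears.
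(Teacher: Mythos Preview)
Your proof is correct and follows essentially the same route as the paper: the paper also applies \eqref{3.2} with parameter $\tfrac12$ to the pair $u=\tfrac{ab}{ta+(1-t)b}$, $v=\tfrac{ab}{tb+(1-t)a}$ and integrates in $t$ for the first inclusion, and applies \eqref{3.2} with $x=a$, $y=b$ and integrates for the second (the concave case is likewise left as a symmetric exercise). Your write-up is in fact more careful than the paper's in justifying the change of variables and the preservation of inclusions under interval integration and positive scalar multiplication.
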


\begin{proof}
We first assume that $f\in SX(h,K_{h},\mathbb{R}^{+}_{\mathcal{I}})$.
Then one has
$$
h\Big(\frac{1}{2}\Big)f(x)+h\Big(\frac{1}{2}\Big)f(y)
\subseteq f\Big(\frac{xy}{\frac{1}{2} x+\frac{1}{2}y}\Big)
=f\Big(\frac{2xy}{x+y}\Big).
$$
Let
$$
x=\frac{ab}{ta+(1-t)b},\quad
y=\frac{ab}{tb+(1-t)a}.
$$
Then,
\begin{equation}
\label{3.3}
h\Big(\frac{1}{2}\Big)\Bigg[f\Big(\frac{ab}{ta+(1-t)b}\Big)
+f\Big(\frac{ab}{tb+(1-t)a}\Big)\Bigg]\subseteq f\Big(\frac{2ab}{a+b}\Big).
\end{equation}
Integrating both sides of inequality \eqref{3.3} over $[0,1]$, we have
\begin{eqnarray*}
\int^{1}_{0}f\Big(\frac{2ab}{a+b}\Big)dt
&=&\Bigg[\int^{1}_{0}\underline{f}\Big(\frac{2ab}{a+b}\Big)dt,
\int^{1}_{0}\overline{f}\Big(\frac{2ab}{a+b}\Big)dt\Bigg]\\
&=&f\Big(\frac{2ab}{a+b}\Big)\\
&\supseteq& h\left(\frac{1}{2}\right)\int^{1}_{0}\Bigg[
f\Big(\frac{ab}{ta+(1-t)b}\Big)+f\Big(\frac{ab}{tb+(1-t)a}\Big)\Bigg]dt\\
&=&h\left(\frac{1}{2}\right)\Bigg[\int^{1}_{0}\Big(\underline{f}\Big(
\frac{ab}{ta+(1-t)b}\Big)+\underline{f}\Big(\frac{ab}{tb+(1-t)a}\Big)\Big)dt,\\
&&\ \ \ \ \ \ \ \ \ \ \ \ \ \ \ \ \ \ \ \
\int^{1}_{0}\Big(\overline{f}\Big(\frac{ab}{ta+(1-t)b}\Big)
+\overline{f}\Big(\frac{ab}{tb+(1-t)a}\Big)\Big)dt\Bigg]\\
&=&h\left(\frac{1}{2}\right)\Bigg[\frac{2ab}{b-a}\int^{b}_{a}
\frac{\underline{f}(x)}{x^{2}}dx,\frac{2ab}{b-a}\int^{b}_{a}
\frac{\overline{f}(x)}{x^{2}}dx\Bigg]\\
&=&2h\left(\frac{1}{2}\right)\frac{ab}{b-a}
\int^{b}_{a}\frac{f(x)}{x^{2}}dx.
\end{eqnarray*}
This implies that
$$
\frac{1}{2h(\frac{1}{2})}f\Big(\frac{2ab}{a+b}\Big)
\supseteq \frac{ab}{b-a}\int^{b}_{a}\frac{f(x)}{x^{2}}dx.
$$
The proof of the second relation follows by using \eqref{3.2} with $x=a$ and $y=b$
and integrating with respect to $t$ over $[0,1]$, that is,
$$
\frac{ab}{b-a}\int^{b}_{a}\frac{f(x)}{x^{2}}dx
\supseteq \big[f(a)+f(b)\big]\int^{1}_{0}h(t)dt.
$$
The intended result follows. If $f\in SV(h,K_{h},\mathbb{R}^{+}_{\mathcal{I}})$, 
then the proof is similar and is left to the reader.
\hfill \qed
\end{proof}

\begin{remark}
\label{rmk3.1}
\rm If $h(t)=t^{s}$, then Theorem~\ref{thm3.1} gives a result
 for harmonical $s$-functions:
\begin{equation}
\label{3.4}
2^{s-1}f\Big(\frac{2ab}{a+b}\Big)\supseteq \frac{ab}{b-a}
\int^{b}_{a}\frac{f(x)}{x^{2}}dx
\supseteq \frac{1}{s+1}\big[f(a)+f(b)\big].
\end{equation}
If $h(t)=t$, then Theorem~\ref{thm3.1} gives a result
for harmonical convex functions:
\begin{equation}
\label{3.5}
f\Big(\frac{2ab}{a+b}\Big)\supseteq \frac{ab}{b-a}\int^{b}_{a}
\frac{f(x)}{x^{2}}dx\supseteq \frac{f(a)+f(b)}{2}.
\end{equation}
If $h(t)=1$, then Theorem ~\ref{thm3.1} gives a
result for harmonical $P$-functions:
\begin{equation}
\label{3.6}
\frac{1}{2}f\Big(\frac{2ab}{a+b}\Big)\supseteq \frac{ab}{b-a}
\int^{b}_{a}\frac{f(x)}{x^{2}}dx\supseteq f(a)+f(b).
\end{equation}
\end{remark}

\begin{theorem}
\label{thm3.2}
Let $f:K_{h}\rightarrow \mathbb{R}^{+}_{\mathcal{I}}$ be an interval-valued
function with $a<b$ and $a,b\in K_{h}$, $f\in \mathcal{IR}_{([a,b])}$,
and let $h:[0,1]\rightarrow (0,\infty)$ be a continuous function.
If $f\in SX(h,K_{h},\mathbb{R}^{+}_{\mathcal{I}})$, then
\begin{equation*}
\begin{split}
\frac{1}{4\Big[h(\frac{1}{2})\Big]^{2}}f\Big(\frac{2ab}{a+b}\Big)
&\supseteq \Delta_{1}\supseteq \frac{ab}{b-a}\int^{b}_{a}
\frac{f(x)}{x^{2}}dx\supseteq \Delta_{2} \\
&\supseteq
\big[f(a)+f(b)\big]\Big[\frac{1}{2}+h\Big(\frac{1}{2}\Big)\Big]\int^{1}_{0}h(t)dt,
\end{split}
\end{equation*}
where
$$
\Delta_{1}=\frac{1}{4h(\frac{1}{2})}\bigg[
f\Big(\frac{4ab}{a+3b}\Big)+f\Big(\frac{4ab}{3a+b}\Big)\bigg]
$$
and
$$
\Delta_{2}=\bigg[\frac{f(a)+f(b)}{2}
+f\Big(\frac{2ab}{a+b}\Big)\bigg]\int^{1}_{0}h(t)dt.
$$
If $f\in SV(h,K_{h},\mathbb{R}^{+}_{\mathcal{I}})$, 
then the opposite signs of inclusion 
are valid in the above formulas.
\end{theorem}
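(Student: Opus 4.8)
The plan is to mimic the proof of Theorem~\ref{thm3.1} but now applied on the two half-subintervals of $[a,b]$ in the harmonic sense, and then to assemble the pieces. Concretely, the two ``midpoints'' that split $\big[\frac{2ab}{a+b},b\big]$ and $\big[a,\frac{2ab}{a+b}\big]$ harmonically are exactly $\frac{4ab}{a+3b}$ and $\frac{4ab}{3a+b}$, which is why these appear in $\Delta_1$. So the first step is to apply the midpoint inclusion \eqref{3.3} type estimate with $\frac12$ to each of these two sub-midpoints: using the harmonical $h$-convexity with $t=\frac12$ one gets
\begin{equation*}
h\Big(\tfrac12\Big)\Bigl[f\Bigl(\tfrac{4ab}{a+3b}\Bigr)+f\Bigl(\tfrac{4ab}{3a+b}\Bigr)\Bigr]
\subseteq 2\,h\Big(\tfrac12\Big)^2 \,\frac{ab}{b-a}\int_a^b\frac{f(x)}{x^2}\,dx,
\end{equation*}
after the same change of variables and integration over $[0,1]$ used in Theorem~\ref{thm3.1}; dividing gives $\Delta_1 \supseteq \frac{ab}{b-a}\int_a^b \frac{f(x)}{x^2}\,dx$.

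Next I would establish the leftmost inclusion $\frac{1}{4h(\frac12)^2}f\big(\frac{2ab}{a+b}\big)\supseteq\Delta_1$. For this, apply \eqref{3.2} with $t=\frac12$ to the pair $x=\frac{4ab}{a+3b}$, $y=\frac{4ab}{3a+b}$, whose harmonic mean is precisely $\frac{2ab}{a+b}$, to obtain
\begin{equation*}
h\Big(\tfrac12\Big)\Bigl[f\Bigl(\tfrac{4ab}{a+3b}\Bigr)+f\Bigl(\tfrac{4ab}{3a+b}\Bigr)\Bigr]\subseteq f\Bigl(\tfrac{2ab}{a+b}\Bigr),
\end{equation*}
divide by $4h(\frac12)^2$, and recognize the left side as $\frac{1}{4h(\frac12)^2}\cdot 4h(\frac12)\cdot(\text{bracket})$... actually more directly: $\Delta_1 = \frac{1}{4h(1/2)}[\,\cdot\,]$ and the displayed inclusion says $[\,\cdot\,]\subseteq \frac{1}{h(1/2)}f(\frac{2ab}{a+b})$, so $\Delta_1\supseteq$ is wrong-signed — I must be careful: inclusions reverse the usual inequality direction, so from $h(\frac12)[\cdots]\subseteq f(\frac{2ab}{a+b})$ we get $\frac{1}{4h(1/2)}[\cdots]\subseteq\frac{1}{4h(1/2)^2}f(\frac{2ab}{a+b})$, i.e. $\Delta_1\subseteq\frac{1}{4h(1/2)^2}f(\frac{2ab}{a+b})$, which is the claimed $\supseteq$ read from the other side. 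Good.

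For the two rightmost inclusions, I would use harmonical $h$-convexity with general $t$ on each half. Writing $u_t=\frac{ab}{ta+(1-t)b}$ and $v_t=\frac{ab}{tb+(1-t)a}$ as in Theorem~\ref{thm3.1}, note $\frac{2ab}{a+b}$ is the harmonic mean of $u_t$ and $v_t$ with weight $\frac12$; applying \eqref{3.2} with the pairs $(a,\frac{2ab}{a+b})$ and $(\frac{2ab}{a+b},b)$ at parameter $t$, adding, and integrating over $[0,1]$ produces, after the substitution, $\frac{ab}{b-a}\int_a^b\frac{f(x)}{x^2}dx\supseteq\Delta_2$; then applying \eqref{3.2} once more with $x=a,y=b$ to the term $f(\frac{2ab}{a+b})$ inside $\Delta_2$ and using $\int_0^1 h(t)dt=\int_0^1 h(1-t)dt$ gives $\Delta_2\supseteq[f(a)+f(b)]\big[\frac12+h(\frac12)\big]\int_0^1 h(t)dt$, where the $\frac12$ comes from $\int_0^1(h(t)+h(1-t))dt=2\int_0^1 h(t)dt$ applied to the $\frac{f(a)+f(b)}{2}$ piece and the $h(\frac12)$ from the midpoint piece. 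The $SV$ case is identical with all inclusions reversed.

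The main obstacle is purely bookkeeping: getting every change of variables right (each half-interval substitution turns $\int_0^1 f(u_t)\frac{dt}{?}$ into a multiple of $\frac{ab}{b-a}\int_a^b\frac{f(x)}{x^2}dx$ with the correct constant, and the two halves must combine without a spurious factor of $2$), and tracking that each division by a positive $h(\frac12)$ preserves inclusions while the role of ``$\leq$'' is played by ``$\supseteq$''. None of the individual steps is hard, but the constants $\frac{1}{4h(1/2)^2}$, $\frac14$ in $\Delta_1$, and $\frac12+h(\frac12)$ on the right must all be reproduced exactly, so I would set up the substitutions once, carefully, and reuse them.
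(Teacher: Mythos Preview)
Your approach is essentially the paper's: apply Theorem~\ref{thm3.1} (both the midpoint and the right-endpoint inclusions) to each harmonic half-interval $[a,\tfrac{2ab}{a+b}]$ and $[\tfrac{2ab}{a+b},b]$, then combine, and use \eqref{3.2} at $t=\tfrac12$ on $f\bigl(\tfrac{2ab}{a+b}\bigr)$ for the two outermost inclusions. Two minor slips to clean up in execution: your first display should read $\supseteq$ with constant $4h(\tfrac12)^2$ (not $\subseteq$ with $2h(\tfrac12)^2$), and the $\tfrac12$ in the final bound comes directly from the $\tfrac{f(a)+f(b)}{2}$ summand of $\Delta_2$, not from any integral identity.
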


\begin{proof}
We only give the proof of the first part of Theorem~\ref{thm3.2}.
Since $f\in SX(h,K_{h},\mathbb{R}^{+}_{\mathcal{I}})$, we have
$$
f\Big(\frac{2xy}{x+y}\Big)\supseteq h\Big(\frac{1}{2}\Big)\Big[f(x)+f(y)\Big]
$$
for all $x,y\in K_{h}$ and $t=\frac{1}{2}$. Choosing
$$
x=\frac{a\frac{2ab}{a+b}}{ta+(1-t)\frac{2ab}{a+b}},
\quad y=\frac{a\frac{2ab}{a+b}}{t\frac{2ab}{a+b}+(1-t)a},
$$
we get
$$
f\Big(\frac{4ab}{a+3b}\Big)\supseteq h\Big(\frac{1}{2}\Big)
\Bigg[f\Big(\frac{a\frac{2ab}{a+b}}{ta+(1-t)
\frac{2ab}{a+b}}\Big)+f\Big(
\frac{a\frac{2ab}{a+b}}{t\frac{2ab}{a+b}+(1-t)a}\Big)\Bigg].
$$
Integrating both sides of the above inequality over $[0,1]$, we have
\begin{eqnarray*}
f\Big(\frac{4ab}{a+3b}\Big)
&\supseteq& h\Big(\frac{1}{2}\Big)\Bigg[\int_{0}^{1}\underline{f}\Big(
\frac{a\frac{2ab}{a+b}}{ta+(1-t)\frac{2ab}{a+b}}\Big)+\underline{f}\Big(
\frac{a\frac{2ab}{a+b}}{t\frac{2ab}{a+b}+(1-t)a}\Big)dt,\\
&&\ \ \ \ \ \ \ \ \ \ \ \ \ \ \ \ \ \ \ \
\int_{0}^{1}\overline{f}\Big(\frac{a\frac{2ab}{a+b}}{ta+(1-t)
\frac{2ab}{a+b}}\Big)+\overline{f}\Big(\frac{a\frac{2ab}{a+b}}{t
\frac{2ab}{a+b}+(1-t)a}\Big)dt\Bigg]\\
&=&h\Big(\frac{1}{2}\Big)\frac{4ab}{b-a}\Bigg[\int_{a}^{\frac{2ab}{a+b}}
\frac{\underline{f}(x)}{x^{2}}dx,\int_{a}^{\frac{2ab}{a+b}}
\frac{\overline{f}(x)}{x^{2}}dx\Bigg]\\
&=&h\Big(\frac{1}{2}\Big)\frac{4ab}{b-a}
\int_{a}^{\frac{2ab}{a+b}}\frac{f(x)}{x^{2}}dx.
\end{eqnarray*}
Similarly, we have
$$
f\Big(\frac{4ab}{3a+b}\Big)\supseteq h\Big(\frac{1}{2}\Big)
\frac{4ab}{b-a}\int_{\frac{2ab}{a+b}}^{b}\frac{f(x)}{x^{2}}dx.
$$
Consequently, we get
\begin{equation*}
\begin{split}
f\Big(\frac{2ab}{a+b}\Big)
&\supseteq h\Big(\frac{1}{2}\Big)\Bigg[f\Big(
\frac{4ab}{a+3b}\Big)+f\Big(\frac{4ab}{3a+b}\Big)\Bigg]
=4\Bigg[h\Big(\frac{1}{2}\Big)\Bigg]^{2}\Delta_{1}\\
&\supseteq4\Bigg[h\Big(\frac{1}{2}\Big)\Bigg]^{2}
\frac{ab}{b-a}\int_{a}^{b}\frac{f(x)}{x^{2}}dx.
\end{split}
\end{equation*}
Thanks to Theorem~\ref{thm3.1},
\begin{equation*}
\begin{split}
\frac{1}{4\Big[h(\frac{1}{2})\Big]^{2}}f\Big(\frac{2ab}{a+b}\Big)
&\supseteq\Delta_{1}
\supseteq \frac{ab}{b-a}\int^{b}_{a}\frac{f(x)}{x^{2}}dx\\
&=\frac{1}{2}\Bigg[\frac{2ab}{b-a}\int^{\frac{2ab}{a+b}}_{a}\frac{f(x)}{x^{2}}dx
+\frac{2ab}{b-a}\int^{b}_{\frac{2ab}{a+b}}\frac{f(x)}{x^{2}}dx\Bigg]\\
&\supseteq \frac{1}{2}\bigg[f(a)+f(b)+2f\Big(\frac{2ab}{a+b}\Big)\bigg]
\int^{1}_{0}h(t)dt
=\Delta_{2}\\
&\supseteq \bigg[\frac{f(a)+f(b)}{2}+h\Big(\frac{1}{2}\Big)f(a)
+h\Big(\frac{1}{2}\Big)f(b)\bigg]\int^{1}_{0}h(t)dt\\
&=\big[f(a)+f(b)\big]\Big[\frac{1}{2}
+h\Big(\frac{1}{2}\Big)\Big]\int^{1}_{0}h(t)dt
\end{split}
\end{equation*}
and the result follows.
\hfill \qed
\end{proof}

\begin{remark}
\label{rmk3.2}
Like in Remark~\ref{rmk3.1}, from Theorem~\ref{thm3.2} we obtain particular 
results for harmonical convex, harmonical $P$-convex, 
and harmonical $s$-convex functions.
\end{remark}

\begin{theorem}
\label{thm3.3}
Let $f, g:K_{h}\rightarrow \mathbb{R}^{+}_{\mathcal{I}}$ be interval-valued
functions with $a<b$ and $a,b\in K_{h}$, $fg\in \mathcal{IR}_{([a,b])}$,
and $h_{1}, h_{2}:[0,1]\rightarrow (0,\infty)$ be continuous functions.
If $f\in SX(h_{1},K_{h},\mathbb{R}^{+}_{\mathcal{I}})$,
$g\in SX(h_{2},K_{h},\mathbb{R}^{+}_{\mathcal{I}})$, then
$$
\frac{ab}{b-a}\int^{b}_{a}\frac{f(x)g(x)}{x^{2}}dx
\supseteq M(a,b)\int^{1}_{0}h_{1}(t)h_{2}(t)dt
+N(a,b)\int^{1}_{0}h_{1}(t)h_{2}(1-t)dt,
$$
where
$$
M(a,b)=f(a)g(a)+f(b)g(b),
\quad
N(a,b)=f(a)g(b)+f(b)g(a).
$$
If $f\in SV(h_{1},K_{h},\mathbb{R}^{+}_{\mathcal{I}})$,
$g\in SV(h_{2},K_{h},\mathbb{R}^{+}_{\mathcal{I}})$, then
$$
\frac{ab}{b-a}\int^{b}_{a}\frac{f(x)g(x)}{x^{2}}dx
\subseteq M(a,b)\int^{1}_{0}h_{1}(t)h_{2}(t)dt
+N(a,b)\int^{1}_{0}h_{1}(t)h_{2}(1-t)dt.
$$
\end{theorem}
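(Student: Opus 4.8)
The plan is to imitate the classical Hermite--Hadamard argument for products of convex functions, the only genuinely interval-theoretic input being that on positive intervals the arithmetic operations are compatible with the inclusion order. First I would apply harmonical $h_{1}$-convexity of $f$ and harmonical $h_{2}$-convexity of $g$ with the endpoints $x=a$, $y=b$: for each $t\in[0,1]$, and noting that $\frac{ab}{ta+(1-t)b}\in K_{h}$ since $K_{h}$ is a harmonical convex set containing $a$ and $b$,
\begin{equation*}
\begin{split}
f\Big(\frac{ab}{ta+(1-t)b}\Big)&\supseteq h_{1}(t)f(a)+h_{1}(1-t)f(b),\\
g\Big(\frac{ab}{ta+(1-t)b}\Big)&\supseteq h_{2}(t)g(a)+h_{2}(1-t)g(b).
\end{split}
\end{equation*}
Because $f,g$ take values in $\mathbb{R}^{+}_{\mathcal{I}}$ and $h_{1},h_{2}>0$, every interval appearing here is positive, and multiplication of positive intervals is order-preserving for $\supseteq$; hence the product of the two left-hand sides contains the product of the two right-hand sides.

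Next I would expand that product of sums, which is the step requiring a little care, since interval multiplication is in general only subdistributive. For \emph{positive} intervals, however, one has genuine distributivity, $[u]\big([v]+[w]\big)=[u][v]+[u][w]$, because all the minima and maxima defining the products are attained at a fixed pair of endpoints; applying this twice yields, for every $t\in[0,1]$,
\begin{equation*}
\begin{split}
(fg)\Big(\frac{ab}{ta+(1-t)b}\Big)
&\supseteq h_{1}(t)h_{2}(t)\,f(a)g(a)+h_{1}(1-t)h_{2}(1-t)\,f(b)g(b)\\
&\quad +h_{1}(t)h_{2}(1-t)\,f(a)g(b)+h_{1}(1-t)h_{2}(t)\,f(b)g(a).
\end{split}
\end{equation*}

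Finally I would integrate over $t\in[0,1]$, using that interval-valued $R$-integration preserves $\supseteq$ and is additive. On the left-hand side, the substitution $x=\frac{ab}{ta+(1-t)b}$ already used in the proof of Theorem~\ref{thm3.1} (under which $dt=\frac{ab}{(b-a)x^{2}}\,dx$ and $t:0\to 1$ corresponds to $x:a\to b$) turns $\int_{0}^{1}(fg)\big(\frac{ab}{ta+(1-t)b}\big)\,dt$ into $\frac{ab}{b-a}\int_{a}^{b}\frac{f(x)g(x)}{x^{2}}\,dx$, the hypothesis $fg\in\mathcal{IR}_{([a,b])}$ guaranteeing integrability of the endpoint functions. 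On the right-hand side, $\int_{0}^{1}\lambda(t)[c]\,dt=\big(\int_{0}^{1}\lambda(t)\,dt\big)[c]$ for a nonnegative continuous scalar $\lambda$ and a fixed positive interval $[c]$, while the substitution $t\mapsto 1-t$ gives $\int_{0}^{1}h_{1}(1-t)h_{2}(1-t)\,dt=\int_{0}^{1}h_{1}(t)h_{2}(t)\,dt$ and $\int_{0}^{1}h_{1}(1-t)h_{2}(t)\,dt=\int_{0}^{1}h_{1}(t)h_{2}(1-t)\,dt$. Grouping the four resulting terms into the coefficients of $M(a,b)=f(a)g(a)+f(b)g(b)$ and $N(a,b)=f(a)g(b)+f(b)g(a)$ gives the asserted inclusion. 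The harmonical $h$-concave case is identical with every inclusion reversed, the positivity assumption again being exactly what makes the distributive identity and the monotonicity of interval multiplication available. So the main thing to watch is invoking correctly the two algebraic facts about positive intervals — monotonicity of multiplication for $\supseteq$ and distributivity of multiplication over addition; everything else is the change of variables and bookkeeping already performed for Theorem~\ref{thm3.1}.
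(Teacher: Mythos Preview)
Your argument is correct and follows the same route as the paper: apply the defining inclusions of harmonical $h_{1}$- and $h_{2}$-convexity at the endpoints $a,b$, multiply, expand, integrate over $[0,1]$, change variables $x=\frac{ab}{ta+(1-t)b}$, and regroup via $t\mapsto 1-t$. If anything, you are more explicit than the paper about why the multiplication and expansion steps are legitimate for intervals (monotonicity of the product and genuine distributivity on $\mathbb{R}^{+}_{\mathcal{I}}$), which the paper uses silently.
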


\begin{proof}
By hypothesis, one has
$$
f\Big(\frac{ab}{ta+(1-t)b}\Big)\supseteq h_{1}(t)f(a)+h_{1}(1-t)f(b),
$$
$$
g\Big(\frac{ab}{ta+(1-t)b}\Big)\supseteq h_{2}(t)g(a)+h_{2}(1-t)g(b).
$$
Then,
\begin{equation*}
\begin{split}
&f\Big(\frac{ab}{ta+(1-t)b}\Big)g\Big(\frac{ab}{ta+(1-t)b}\Big)\\
&\supseteq h_{1}(t)h_{2}(t)f(a)g(a)+h_{1}(t)h_{2}(1-t)f(a)g(b)\\
&\ \ \ +h_{1}(1-t)h_{2}(t)f(b)g(a)+h_{1}(1-t)h_{2}(1-t)f(b)g(b).
\end{split}
\end{equation*}
Integrating both sides of the above inequality over $[0,1]$, we have
\begin{eqnarray*}
&&\int^{1}_{0}f\Big(\frac{ab}{ta+(1-t)b}\Big)g\Big(\frac{ab}{ta+(1-t)b}\Big)dt\\
&=&\int^{1}_{0}\Bigg[\underline{f}\Big(\frac{ab}{ta+(1-t)b}\Big)\underline{g}
\Big(\frac{ab}{ta+(1-t)b}\Big),\overline{f}\Big(\frac{ab}{ta+(1-t)b}\Big)
\overline{g}\Big(\frac{ab}{ta+(1-t)b}\Big)\Bigg]dt\\
&=&\Bigg[\int^{1}_{0}\underline{f}\Big(\frac{ab}{ta+(1-t)b}\Big)
\underline{g}\Big(\frac{ab}{ta+(1-t)b}\Big)dt,\int^{1}_{0}\overline{f}
\Big(\frac{ab}{ta+(1-t)b}\Big)\overline{g}\Big(\frac{ab}{ta+(1-t)b}\Big)dt\Bigg]\\
&=&\Bigg[\frac{ab}{b-a}\int^{b}_{a}\frac{\underline{f}(x)\underline{g}(x)}{x^{2}}dx,
\frac{ab}{b-a}\int^{b}_{a}\frac{\overline{f}(x)\overline{g}(x)}{x^{2}}dx\Bigg]\\
&=&\frac{ab}{b-a}\int^{b}_{a}\frac{f(x)g(x)}{x^{2}}dx\\
&\supseteq& f(a)g(a)\int^{1}_{0}h_{1}(t)h_{2}(t)dt+f(a)g(b)\int^{1}_{0}h_{1}(t)h_{2}(1-t)dt\\
&&\ \ \ +f(b)g(a)\int^{1}_{0}h_{1}(1-t)h_{2}(t)dt+f(b)g(b)\int^{1}_{0}h_{1}(1-t)h_{2}(1-t)dt\\
&=&M(a,b)\int^{1}_{0}h_{1}(t)h_{2}(t)dt +N(a,b)\int^{1}_{0}h_{1}(t)h_{2}(1-t)dt.
\end{eqnarray*}
This concludes the proof.
\hfill \qed
\end{proof}

\begin{remark}
Similarly as before, from Theorem~\ref{thm3.3} we obtain particular 
results for harmonical convex, harmonical $P$-convex, 
and harmonical $s$-convex functions.
\end{remark}

\begin{theorem}
\label{thm3.4}
Let $f, g:K_{h}\rightarrow \mathbb{R}^{+}_{\mathcal{I}}$ be interval-valued
functions with $a<b$, where $a,b\in K_{h}$ and $f g\in \mathcal{IR}_{([a,b])}$, 
and $h_{1}, h_{2}:[0,1]\rightarrow (0,\infty)$ be continuous functions. If 
$f\in SX(h_{1},K_{h},\mathbb{R}^{+}_{\mathcal{I}})$
and $g\in SX(h_{2},K_{h},\mathbb{R}^{+}_{\mathcal{I}})$, then
\begin{multline}
\label{eq:rel:thm:3.4}
\frac{1}{2h_{1}(\frac{1}{2})h_{2}(\frac{1}{2})}f\left(\frac{2ab}{a+b}\right)
g\left(\frac{2ab}{a+b}\right)
\supseteq\frac{ab}{b-a}\int^{b}_{a}\frac{f(x)g(x)}{x^{2}}dx\\
+ M(a,b)\int^{1}_{0}h_{1}(t)h_{2}(1-t)dt
+N(a,b)\int^{1}_{0}h_{1}(t)h_{2}(t)dt.
\end{multline}
If $f\in SX(h_{1},K_{h},\mathbb{R}^{+}_{\mathcal{I}})$ and
$g\in SX(h_{2},K_{h},\mathbb{R}^{+}_{\mathcal{I}})$, then
previous formula \eqref{eq:rel:thm:3.4} holds with 
the opposite sign of inclusion.
\end{theorem}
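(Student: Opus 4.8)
The plan is to merge the midpoint trick that produced the left-hand inequality in Theorem~\ref{thm3.1} with the product expansion used in the proof of Theorem~\ref{thm3.3}. First I would invoke Definition~\ref{defn3.7} at $t=\tfrac12$ for both functions, getting $h_{1}(\tfrac12)[f(x)+f(y)]\subseteq f\!\big(\tfrac{2xy}{x+y}\big)$ and $h_{2}(\tfrac12)[g(x)+g(y)]\subseteq g\!\big(\tfrac{2xy}{x+y}\big)$ for all $x,y\in K_{h}$. Because $f,g$ are $\mathbb{R}^{+}_{\mathcal{I}}$-valued and the product of positive intervals is monotone under ``$\subseteq$'', multiplying these two inclusions yields $h_{1}(\tfrac12)h_{2}(\tfrac12)\,[f(x)+f(y)]\,[g(x)+g(y)]\subseteq f\!\big(\tfrac{2xy}{x+y}\big)g\!\big(\tfrac{2xy}{x+y}\big)$.

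Next I would substitute $x=\frac{ab}{ta+(1-t)b}$ and $y=\frac{ab}{tb+(1-t)a}$, exactly as in the proof of Theorem~\ref{thm3.1}, so that $\tfrac{2xy}{x+y}=\tfrac{2ab}{a+b}$, and expand the left-hand product into the four terms $f(x)g(x)$, $f(y)g(y)$, $f(x)g(y)$ and $f(y)g(x)$. Integrating over $[0,1]$, the right side collapses to $f\!\big(\tfrac{2ab}{a+b}\big)g\!\big(\tfrac{2ab}{a+b}\big)$, and the two diagonal terms $f(x)g(x)$ and $f(y)g(y)$ are each turned into $\frac{ab}{b-a}\int_{a}^{b}\frac{f(x)g(x)}{x^{2}}\,dx$ by the very change of variable already employed in Theorem~\ref{thm3.1}, giving $2\,\frac{ab}{b-a}\int_{a}^{b}\frac{f(x)g(x)}{x^{2}}\,dx$ altogether. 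For the two cross terms I would estimate each factor using the individual harmonical $h$-convexity of $f$ and of $g$, e.g.\ $f\!\big(\tfrac{ab}{ta+(1-t)b}\big)\supseteq h_{1}(t)f(a)+h_{1}(1-t)f(b)$ and $g\!\big(\tfrac{ab}{tb+(1-t)a}\big)\supseteq h_{2}(1-t)g(a)+h_{2}(t)g(b)$, with the symmetric bounds for $f(y)g(x)$. Multiplying out, integrating, and using $\int_{0}^{1}h_{i}(1-t)h_{j}(1-t)\,dt=\int_{0}^{1}h_{i}(t)h_{j}(t)\,dt$ together with $\int_{0}^{1}h_{1}(1-t)h_{2}(t)\,dt=\int_{0}^{1}h_{1}(t)h_{2}(1-t)\,dt$, the four $h_{i}h_{j}$ integrals pair up and the cross terms contribute $2M(a,b)\int_{0}^{1}h_{1}(t)h_{2}(1-t)\,dt+2N(a,b)\int_{0}^{1}h_{1}(t)h_{2}(t)\,dt$.

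Assembling the diagonal and cross contributions inside the outer factor $h_{1}(\tfrac12)h_{2}(\tfrac12)$ and dividing by $2h_{1}(\tfrac12)h_{2}(\tfrac12)>0$ gives \eqref{eq:rel:thm:3.4}; the $SV$ case (stated with the reversed inclusion) is obtained by reversing every inclusion in the argument. The hard part will be purely organizational: one must check that multiplication and interval Riemann integration preserve ``$\subseteq$'' on $\mathbb{R}^{+}_{\mathcal{I}}$, keep the chain of inclusions consistently oriented — the $t=\tfrac12$ step flips the usual convexity inclusion while the cross-term estimates flip it back — and pair up the $h_{i}h_{j}$ integrals correctly under $t\mapsto 1-t$ so that $M(a,b)$ and $N(a,b)$ end up attached to the correct integrals. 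No idea beyond Theorems~\ref{thm3.1} and~\ref{thm3.3} appears to be required.
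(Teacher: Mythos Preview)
Your proposal is correct and follows essentially the same route as the paper: apply the harmonical $h$-convexity of $f$ and $g$ at $t=\tfrac12$ with the substitution $x=\frac{ab}{ta+(1-t)b}$, $y=\frac{ab}{tb+(1-t)a}$, multiply, split into diagonal and cross terms, bound the cross terms via Definition~\ref{defn3.7} again, integrate, and use the $t\mapsto 1-t$ symmetry to pair the $h_i h_j$ integrals. One small remark: your phrase about the $t=\tfrac12$ step ``flipping'' the inclusion and the cross-term estimate ``flipping it back'' is misleading---both steps produce inclusions in the \emph{same} direction (each yields something contained in the larger object), so the chain $f(\xi)g(\xi)\supseteq\cdots\supseteq\cdots$ is consistently oriented throughout.
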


\begin{proof}
Let $\xi = \frac{2ab}{a+b}$. By hypothesis, one has
$$
f(\xi)\supseteq h_{1}\Big(\frac{1}{2}\Big)
f\Big(\frac{ab}{ta+(1-t)b}\Big)
+h_{1}\Big(\frac{1}{2}\Big)f\Big(\frac{ab}{tb+(1-t)a}\Big),
$$
$$
g(\xi)\supseteq
h_{2}\Big(\frac{1}{2}\Big)g\Big(\frac{ab}{ta+(1-t)b}\Big)
+h_{2}\Big(\frac{1}{2}\Big)g\Big(\frac{ab}{tb+(1-t)a}\Big).
$$
Then,
\begin{small}
\begin{eqnarray*}
&&f(\xi)g(\xi)\\
&\supseteq&h_{1}\Big(\frac{1}{2}\Big)h_{2}\Big(\frac{1}{2}\Big)\Bigg[f\Big(\frac{ab}{ta
+(1-t)b}\Big)g\Big(\frac{ab}{ta+(1-t)b}\Big)+f\Big(\frac{ab}{tb+(1-t)a}\Big)g
\Big(\frac{ab}{tb+(1-t)a}\Big)\Bigg]\\
&&\ \ +h_{1}\Big(\frac{1}{2}\Big)h_{2}\Big(\frac{1}{2}\Big)\Bigg[f\Big(\frac{ab}{ta+(1-t)b}
\Big)g\Big(\frac{ab}{tb+(1-t)a}\Big)+f\Big(\frac{ab}{tb+(1-t)a}\Big)g\Big(\frac{ab}{ta+(1-t)b}\Big)\Bigg]\\
&\supseteq& h_{1}\Big(\frac{1}{2}\Big)h_{2}\Big(\frac{1}{2}\Big)\Bigg[f\Big(\frac{ab}{ta+(1-t)b}\Big)
g\Big(\frac{ab}{ta+(1-t)b}\Big)+f\Big(\frac{ab}{tb+(1-t)a}\Big)g\Big(\frac{ab}{tb+(1-t)a}\Big)\Bigg]\\
&&\ \ +h_{1}\Big(\frac{1}{2}\Big)h_{2}\Big(\frac{1}{2}\Big)\Big[\big(h_{1}(t)f(a)
+h_{1}(1-t)f(b)\big)\big(h_{2}(1-t)g(a)+h_{2}(t)g(b)\big)\\
&&\ \ \ +\Big(h_{1}(1-t)f(a)+h_{1}(t)f(b)\Big)\Big(h_{2}(t)g(a)
+h_{2}(1-t)g(b)\Big)\Big]\\
&=&h_{1}\Big(\frac{1}{2}\Big)h_{2}\Big(\frac{1}{2}\Big)\Bigg[f\Big(
\frac{ab}{ta+(1-t)b}\Big)g\Big(\frac{ab}{ta+(1-t)b}\Big)
+f\Big(\frac{ab}{tb+(1-t)a}\Big)g\Big(\frac{ab}{tb+(1-t)a}\Big)\Bigg]\\
&&\ \ +h_{1}\Big(\frac{1}{2}\Big)h_{2}\Big(\frac{1}{2}\Big)\Big[
\Big(h_{1}(t)h_{2}(1-t)+h_{1}(1-t)h_{2}(t)\Big)M(a,b)\\
&&\ \ \ \ \ \ \ +\Big(h_{1}(t)h_{2}(t)+h_{1}(1-t)h_{2}(1-t)\Big)N(a,b)\Big].
\end{eqnarray*}
Integrating over $[0,1]$, we have
\begin{multline*}
\frac{1}{2h_{1}(\frac{1}{2})h_{2}(\frac{1}{2})}
f(\xi)g(\xi)
\supseteq\frac{ab}{b-a}\int^{b}_{a}\frac{f(x)g(x)}{x^{2}}dx\\
+ M(a,b)\int^{1}_{0}h_{1}(t)h_{2}(1-t)dt
+N(a,b)\int^{1}_{0}h_{1}(t)h_{2}(t)dt.
\end{multline*}
\end{small}
This concludes the proof.
\hfill \qed
\end{proof}

\begin{remark}
From Theorem~\ref{thm3.4}, we obtain particular results for
harmonical convex, harmonical $P$-convex, 
and harmonical $s$-convex functions.
\end{remark}


\section{Conclusions}
\label{sec:4}

We introduced the new concept of harmonical $h$-convexity
for interval-valued functions. Some interesting Hermite--Hadamard
type inequalities for harmonical $h$-convex interval-valued
functions have then been proved. Our results give interval-valued
counterparts of the inequalities presented by \.{I}\c{s}can
and Noor et al., respectively in \cite{I14} and \cite{NNAC15}.

Further developments are possible. As a future research direction,
we intend to investigate Hermite--Hadamard type inequalities for
harmonical $h$-convex interval-valued functions on arbitrary
time scales.


\section*{Acknowledgements}

This research is supported by the Fundamental Research Funds
for the Central Universities (2017B19714, 2017B07414 and 2019B44914),
and in part by Special Soft Science Research Projects of Technological
Innovation in Hubei Province (2019ADC146), Key Projects of Educational
Commission of Hubei Province of China (D20192501) and the Natural Science
Foundation of Jiangsu Province (BK20180500).
Torres was supported by FCT and CIDMA, project UID/MAT/04106/2019.

The authors would like to thank an anonymous reviewer for his/her critical remarks 
and precious suggestions, which helped them to improve the quality and clarity 
of the manuscript.



\end{document}